\documentclass[aop]{imsart2}
\usepackage{tikz}
\usepackage{neuralnetwork} 
%% Packages
\RequirePackage{amsthm,amsmath,amsfonts,amssymb}
\RequirePackage[numbers]{natbib}
\RequirePackage[colorlinks,citecolor=blue,urlcolor=blue]{hyperref}
\RequirePackage{graphicx}

\startlocaldefs
%%%%%%%%%%%%%%%%%%%%%%%%%%%%%%%%%%%%%%%%%%%%%%
%%                                          %%
%% Uncomment next line to change            %%
%% the type of equation numbering           %%
%%                                          %%
%%%%%%%%%%%%%%%%%%%%%%%%%%%%%%%%%%%%%%%%%%%%%%
%\numberwithin{equation}{section}
%%%%%%%%%%%%%%%%%%%%%%%%%%%%%%%%%%%%%%%%%%%%%%
%%                                          %%
%% For Axiom, Claim, Corollary, Hypothesis, %%
%% Lemma, Theorem, Proposition              %%
%% use \theoremstyle{plain}                 %%
%%                                          %%
%%%%%%%%%%%%%%%%%%%%%%%%%%%%%%%%%%%%%%%%%%%%%%
\theoremstyle{plain}

\newtheorem{theorem}{Theorem}[section]
\newtheorem{proposition}[theorem]{Proposition}
\newtheorem{lemma}[theorem]{Lemma}
%%%%%%%%%%%%%%%%%%%%%%%%%%%%%%%%%%%%%%%%%%%%%%
%%                                          %%
%% For Assumption, Definition, Example,     %%
%% Notation, Property, Remark, Fact         %%
%% use \theoremstyle{remark}                %%
%%                                          %%
%%%%%%%%%%%%%%%%%%%%%%%%%%%%%%%%%%%%%%%%%%%%%%
\theoremstyle{remark}
\newtheorem{definition}[theorem]{Definition}
\newtheorem{remark}[theorem]{Remark}

%%%%%%%%%%%%%%%%%%%%%%%%%%%%%%%%%%%%%%%%%%%%%%
%% Please put your definitions here:        %%
%%%%%%%%%%%%%%%%%%%%%%%%%%%%%%%%%%%%%%%%%%%%%%

%%Specifics

\def \cD {\mathcal{D}}
\def \cE {\mathcal{E}}

\def \cP {\mathcal{P}}

\def \i {\iota}

\def \N {\mathbb{N}}

\def \P {\mathbb{P}}

\def \R {\mathbb{R}}

\endlocaldefs

\begin{document}

\begin{frontmatter}

\title{A non-oriented first passage percolation model and 
  statistical invariance by time reversal}
%\title{A sample article title with some additional note\thanksref{t1}}
\runtitle {A non-oriented first passage 
percolation model}
%\thankstext{T1}{A sample additional note to the title.}

\begin{aug}
%%%%%%%%%%%%%%%%%%%%%%%%%%%%%%%%%%%%%%%%%%%%%%%
%% Only one address is permitted per author. %%
%% Only division, organization and e-mail is %%
%% included in the address.                  %%
%% Additional information can be included in %%
%% the Acknowledgments section if necessary. %%
%%%%%%%%%%%%%%%%%%%%%%%%%%%%%%%%%%%%%%%%%%%%%%%
\author[A,B]{\fnms{Alejandro F.} \snm{Ram\'irez}\ead[label=e1,mark]{ar23@nyu.edu}},
\author[B]{\fnms{Santiago} \snm{Saglietti}\ead[label=e2,mark]{sasaglietti@mat.uc.cl}}
\and
\author[C]{\fnms{Lingyun} \snm{Shao}\ead[label=e3,mark]{ls5054@nyu.edu}}
%%%%%%%%%%%%%%%%%%%%%%%%%%%%%%%%%%%%%%%%%%%%%%
%% Addresses                                %%
%%%%%%%%%%%%%%%%%%%%%%%%%%%%%%%%%%%%%%%%%%%%%%
\address[A]{NYU-ECNU Institute of Mathematical Sciences,
NYU Shanghai,
\printead{e1}}

\address[B]{Facultad de Matem\'aticas,
Pontificia Universidad Cat\'olica de Chile,
\printead{e2}}

\address[C]{Department of Mathematics,
  University of Toronto
\printead{e3}}
\end{aug}

\begin{abstract}
  We introduce and study a non-oriented first passage percolation model
  having a property of statistical invariance by time reversal. This
  model is defined in a graph having directed edges and the passage times
  associated with each set of outgoing edges from a given vertex are distributed
  according to a generalized Bernoulli--Exponential law and
  i.i.d. among vertices. We derive the statistical invariance property by time
  reversal through a zero-temperature limit of the random walk in Dirichlet
  environment model.
\end{abstract}

\begin{keyword}[class=MSC]
\kwd[Primary ]{60K35, 60 K37, 82B43}
%\kwd{00X00}
%\kwd[; secondary ]{00X00}
\end{keyword}

\begin{keyword}
\kwd{First passage percolation}
\kwd{Statistical invariance by time reversal}
\end{keyword}

\end{frontmatter}
%%%%%%%%%%%%%%%%%%%%%%%%%%%%%%%%%%%%%%%%%%%%%%
%% Please use \tableofcontents for articles %%
%% with 50 pages and more                   %%
%%%%%%%%%%%%%%%%%%%%%%%%%%%%%%%%%%%%%%%%%%%%%%
%\tableofcontents

\section{Introduction} We present a non-oriented first 
passage 
percolation (FPP) model  satisfying a statistical invariance property by
time reversal. This property is analogous to the statistical 
invariance 
property satisfied by the random walk in Dirichlet random environment \cite{ST17}.

Random walk in random environment (or RWRE, for short) is a challenging model for which
several fundamental questions remain open, including the proof of a
law of large numbers \cite{Z06, DR14}.  In particular, in the
multidimensional
case, very few explicit computations can be made, so that
for example, there are no general formulas for the velocity,
asymptotic direction,
the diffusion matrix in the case of Gaussian fluctuations, or
the large deviation rate functions. 
The random walk in Dirichlet environment (RWDE) is a particular case of
RWRE in which the marginal law of the
environment
at each site is given by a Dirichlet distribution \cite{ST17}. Sabot
proved in \cite{S11} that RWDE satisfies a property of satistical
invariance by
time reversal. This was used in several works (see \cite{ST17}), to prove
fundamental properties of this model, including explicit conditions
for transient-recurrent behavior in dimension $d=3$ \cite{S11} and
an explicit formula for the asymptotic direction in \cite{T15}.
Also, a particular case of the RWDE model corresponding to
parameters which allow only oriented (or directed) movements of the
random walk, called the Beta random walk, was studied by
Barraquand and Corwin in \cite{BC17}, where they were able to proved Tracy--Widom GUE fluctuations of the quenched
large deviation principle, thus establishing the belonging of this model to the KPZ universality class. Furthermore, 
the zero-temperature limit of the Beta random walk, which is a
first passage percolation model called Exponential first passage
percolation, was also studied in \cite{BC17}, where they showed that the fluctuations
of the first passage times are also of the Tracy--Widom GUE type.

In this article we introduce a zero-temperature version of general
RWDE, which we call the {\it Bernoulli--Exponential first passage
  percolation model}, which turns out to be a non-oriented extension of the exponential
first passage percolation model from \cite{BC17}. The main result of
this article is the proof of a zero-temperature version of the
statistical reversibility under time reversal property of this
model. To our knowledge, this is the first model of FPP type
satisfying such a property.

In principle, the definition of the Bernoulli--Exponential first
passage percolation model can be understood following the principles
of tropical geometry: starting from RWDE, addition is replaced with minimization and
multiplication with addition. However,
due to the presence of infinite sums in the probabilities involved in
the RWDE, this correspondence cannot be made rigorous, and so in
the end here the Bernoulli--Exponential first passage percolation model is
defined and studied directly, without any specific reference to the
RWDE. An exception to this is the proof of the statistical invariance
property under time reversal for the Bernoulli--Exponential first passage
percolation model, which is derived through a limiting procedure from the RWDE.

In what follows, in Section \ref{section-two}, we give a precise
statement of the main results of the article. In Section
\ref{section-three},
we derive the Bernoulli--Exponential first passage exponential model
as a zero-temperature limit of the RWDE. In Section
\ref{section-four},
we present a proof of the statistical invariance by time reversal of
the model. Finally in Section \ref{section-five}, we apply the
statistical
invariance by time reversal property to compute some first passage
times
in specific graphs.

\section{Main results}
\label{section-two}

In order to define this model, we need to introduce first a multinomial version of 
the 
Bernoulli--Exponential random variable.
To this end, let us recall the generalized Bernoulli distribution:
given $M \in \N$ and a probability vector $\mathbf{p}=(p_1,\dots,p_M) \in \R^M$, we say that a random vector $B=(B_1,\dots,B_M)$ has \textit{generalized Bernoulli distribution} of order $M$ with parameters $\mathbf{p}$, and denote it by $B \sim \textrm{Be}_M(\mathbf{p})$, if it satisfies 
\[
\P( B = \mathrm{e}_i )= p_i, \quad i=1,\dots,M, 
\]	where $\mathrm{e}_i$ denotes the $i$-th canonical vector in $\R^M$. We next define the Bernoulli--Exponential distribution of order $M$.

\begin{definition}\label{def:1}
Given $M \in \N$ and a vector $\mathbf{a}=(a_1,\dots,a_M)$ with real positive entries, we define the {\it generalized Bernoulli--Exponential 
  distribution} of order $M$ with parameters $\mathbf{a}$ as the distribution of the random vector 
 \begin{equation}\label{eq:defX}
X=((1-B_1)E_1,\ldots, (1-B_M)E_M), 
\end{equation}
where: 
\begin{enumerate}
\item [$\bullet$] $B=(B_1,\dots,B_M)$ has \textit{generalized Bernoulli distribution} with parameters $M$ and $\mathbf{p}^{(\mathbf{a})}$, where $\mathbf{p}^{(\mathbf{a})}=(p_1^{(\mathbf{a})},\dots,p_M^{(\mathbf{a})})$ is given by 
\begin{equation}\label{eq:defp}
p_i^{(\mathbf{a})}:=\frac{a_i}{\sum_{j=1}^M a_j}; 
\end{equation}
\item [$\bullet$] $E=(E_1,\dots,E_M)$ is a random vector independent of $B$ which has independent entries, with each entry $E_i$ having an Exponential distribution of parameter $a_i$, respectively. 
\end{enumerate} In the sequel, we will write $X \sim \textrm{Be-Exp}_M(\mathbf{a})$ to indicate that $X$ has this distribution. 
\end{definition}

With Definition~\ref{def:1} at our disposal, we now define the {\it generalized Bernoulli--Exponential first 
  passage percolation model} as follows. Let
$G=(V,\cE)$ be a directed graph endowed with a family of positive weights $\mathbf{a}:=(a_e)_{e\in \cE}$.  Assume that $G$ is finite and strongly connected, i.e., that for any $x,y\in 
V$
there exists a path going from $x$ to $y$, where by \textit{path} we understand a finite vector $\pi=(x_0,\ldots,x_n)$ with $x_0,\ldots,  
 x_n\in V$ and $(x_i,x_{i+1})\in \cE$ for all $0\le i\le n-1$.  
For each $x \in V$, define $\cE_x:=\{ e \in \cE :  e=(x,y) \text{ for some }y \in V\}$ to be the set of edges in $\cE$ leaving $x$ and write $M_x:=\# \cE_c$ for their total number.  Now consider a family $\omega=(\omega(x))_{x \in V}$ of independent random vectors where, for each $x \in V$, the vector $\omega(x)=(\omega(x,e))_{e \in \cE_x}$ has generalized Bernolli--Exponential distribution of order $M_x$ with parameters $\mathbf{a}_x:=(a_e)_{e \in \cE_x}$.
We will call the family $\omega$ a {\it 
    generalized 
    Bernoulli--Exponential environment} on the graph $G$ with parameters $\mathbf{a}$, and denote its law by $\P_{\text{BE}}^{(\mathbf{a})}$. Finally, we  
 denote by $\Pi(x,y)$ the set of paths going from $x$ to $y$. 
Now, we define the {\it first passage time between} two points $x,y\in 
G$  by 
\begin{equation}
    T(x,y):=\inf_{\pi\in \Pi(x,y)}\sum_{i=0}^{n-1} w(x_i,\Delta x_i), 
\end{equation} where $\Delta x_i := (x_i,x_{i+1})$. We will call $(T(x,y))_{x,y \in V}$ the generalized Bernoulli-Exponential 
 first passage percolation model on $G$.

In order to state our main result, we first need to introduce the \textit{dual graph} 
of $G=(V,\cE)$, which is defined as 
$\check{G}:=(V,\check{\cE})$, where  
\[
\check \cE=\{(y,x): (x,y)\in \cE\}, 
\]
is the set of reversed edges of $\cE$. In the following, we will write $\check 
e=(y,x)$ for the reversal of the edge $e=(x,y)$. We endow $\check{G}$ with the family of reversed weights $\check{\mathbf{a}}:=(\check{a}_{\check{e}})_{\check{e} \in \check{\cE}}$, where 
\begin{equation}\label{eq:defrev}
\check{a}_{\check e}:=a_e. 
\end{equation}
Furthermore, define the \textit{divergence operator} on the graph $G$ as 
the function $\text{div}: {\mathbb R}^\cE\to{\mathbb R}^V$ given by
\[
 \text{div}(\theta)(x):=\sum_{e \in \cE^x}\theta_e-\sum_{e \in \cE_x}\theta_e, 
\]
where $\cE^x:=\{ e \in \cE : e=(y,x) \text{ for some }y \in V\}$ is the set of edges in $\cE$ arriving at $x$. Finally, we shall say that a 
function 
$f:\cE\to \R$ satisfies the {\it closed loop property} if 
for every path $\pi=(x_0,\ldots, x_n)$ which is closed, i.e., which satisfies $x_n=x_0$, we have that 
\[
\sum_{i=0}^{n-1} f(\Delta x_i)=0. 
\]

Our main result is then the following.
 
\begin{theorem}
  \label{tone}
Let  $G=(V,\cE)$ be a finite strongly connected directed graph endowed with a family of positive weights $\mathbf{a}=(a_e)_{e \in \cE}$ with $\text{div}(\mathbf{a}) = 0$. Then, there exists a coupling $\Upsilon=(\omega, \check{\omega}, \varphi)$ such that:
\begin{enumerate}
\item [i.] $\omega$ is a generalized Bernoulli--Exponential environment on $G$ with parameters $\mathbf{a}$;
\item [ii.] $\check{\omega}$ is a generalized Bernoulli--Exponential environment on $\check{G}$ with parameters $\check{\mathbf{a}}$;
\item [iii.] $\varphi=(\varphi(e))_{e \in \cE}$ is a random function satisfying the closed loop property almost surely;
\item [iv.] $\check{\omega}(y,\check{e})=\varphi(e)+\omega(x,e)$ for all $e=(x,y) \in \cE$ almost surely.	
\end{enumerate} In particular, if $\pi=(x_0,x_1,\dots,x_n)$ is any closed path and $\check{\pi}:=(x_n,x_{n-1},\dots,x_0)$ denotes its time reversal then
\[
\sum_{i=0}^{n-1} \omega(x_i,\Delta x_i) = \sum_{i=0}^{n-1} \check{\omega}(\check{x}_i,\Delta \check{x}_i),
\] where, for each $i=0,\dots,n-e$, we set $\check{x}_i:=x_{n-1}$ and $\Delta \check{x}_i:=(\check{x}_i,\check{x}_{i+1})$.
%for any generalized Bernoulli--Exponential environment $\omega$ on $G$ with parameters $\mathbf{a}$ there exists a random function $\varphi: \R^\cE \to \R$ satisfying the closed loop property almost surely such that, if for $e=(x,y) \in \cE$ we define 
%\[
%\check \omega(y,\check e):=\varphi(e)+\omega(x,e),
%\] and write $\check \omega(y):=(\check w (y,\check{e}))_{e \in \cE^y}$
%then $\check \omega=(\check \omega (y))_{y \in V}$ is a generalized Bernoulli--Exponential environment on the dual graph $\check G$ with law $\P^{(\check{\mathbf{a}})}$. 
 \end{theorem}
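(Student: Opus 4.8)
The plan is to derive Theorem~\ref{tone} as a zero-temperature ($\beta \to \infty$) limit of Sabot's statistical invariance by time reversal for the random walk in Dirichlet environment. Recall that for RWDE on $G$ with Dirichlet parameters $\beta \mathbf{a}$ (so that as $\beta\to\infty$ the Dirichlet weights concentrate on the vertex of the simplex dictated by the largest $a_e$), Sabot's result (see \cite{S11,ST17}) provides, whenever $\mathrm{div}(\mathbf{a})=0$, a coupling of an environment $\omega^\beta$ on $G$, a reversed environment $\check\omega^\beta$ on $\check G$, and a divergence-free current such that $\check\omega^\beta(y,\check e) = \psi^\beta(e)\,\omega^\beta(x,e)$ multiplicatively, where $\psi^\beta$ has the closed-loop (gradient) property. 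Passing to logarithms and rescaling by $-1/\beta$ turns products into sums; the key input is that under this rescaling the Dirichlet transition weights at each vertex converge in law to the generalized Bernoulli--Exponential vector of Definition~\ref{def:1}. This last convergence is exactly the content of the zero-temperature computation carried out in Section~\ref{section-three}, which I would invoke here.

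Concretely, the steps are as follows. First, set up the RWDE coupling $\Upsilon^\beta = (\omega^\beta, \check\omega^\beta, \psi^\beta)$ from \cite{S11} on the same probability space for each $\beta > 0$. Second, define $\omega_\beta(x,e) := -\tfrac{1}{\beta}\log \omega^\beta(x,e)$, $\check\omega_\beta(y,\check e):= -\tfrac1\beta \log\check\omega^\beta(y,\check e)$, and $\varphi_\beta(e) := -\tfrac1\beta\log\psi^\beta(e)$; then property (iv) in multiplicative form becomes the additive identity $\check\omega_\beta(y,\check e) = \varphi_\beta(e) + \omega_\beta(x,e)$ exactly, and the closed-loop property for $\psi^\beta$ (a product over a closed loop equal to $1$) becomes the closed-loop property for $\varphi_\beta$ (a sum over a closed loop equal to $0$) exactly. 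Third, I would show that the joint law of $(\omega_\beta, \check\omega_\beta, \varphi_\beta)$ is tight as $\beta \to \infty$ and that every subsequential limit $\Upsilon = (\omega,\check\omega,\varphi)$ has marginals as claimed: $\omega$ is a generalized Bernoulli--Exponential environment on $G$ with parameters $\mathbf{a}$ (by the Section~\ref{section-three} limit, applied independently across vertices), $\check\omega$ is one on $\check G$ with parameters $\check{\mathbf{a}}$ (same limit on the dual graph, using \eqref{eq:defrev} and that $\mathrm{div}(\check{\mathbf{a}})=0$ follows from $\mathrm{div}(\mathbf a)=0$), and properties (iii)--(iv) pass to the limit because they are closed conditions (equalities of continuous functions, preserved under weak convergence via Skorokhod representation). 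Fourth, the final displayed identity about closed paths is an immediate consequence of (iii) and (iv): summing (iv) along a closed path $\pi$, the telescoping sum $\sum_i \varphi(\Delta x_i)$ vanishes by the closed-loop property, leaving $\sum_i \check\omega(\check x_i,\Delta\check x_i) = \sum_i\omega(x_i,\Delta x_i)$ after re-indexing $i \mapsto n-1-i$.

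The main obstacle I anticipate is controlling the limit at the level of the environments \emph{jointly with} the constraint (iv), rather than marginally. The marginal convergence of Dirichlet-to-Bernoulli--Exponential is routine (it is the content of the earlier section), but one must ensure that the coupling relation survives the limit: since $\check\omega_\beta(y,\check e) - \omega_\beta(x,e) = \varphi_\beta(e)$ holds identically in $\beta$, tightness of any two of the three families gives tightness of the third, and Skorokhod representation lets us pass to a common probability space where the convergence is almost sure, so the identity and the closed-loop property survive; the delicate point is simply verifying tightness of $\varphi_\beta$, which follows from tightness of $\omega_\beta$ and $\check\omega_\beta$ (themselves convergent in law to proper distributions, hence tight). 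A secondary point worth care is that the zero-temperature limit in Section~\ref{section-three} must be stated as convergence of the \emph{full vector} $\omega_\beta(x)$ at each vertex (not just of individual coordinates), so that the limiting $\omega(x)$ genuinely has the $\mathrm{Be\text{-}Exp}_{M_x}(\mathbf{a}_x)$ law; this is where the $-\tfrac1\beta\log$ scaling of a $\mathrm{Dir}(\beta\mathbf a_x)$ vector must be shown to converge to $((1-B_1)E_1,\dots,(1-B_{M_x})E_{M_x})$ with $B$ picking out the argmax coordinate and the $E_i$'s independent Exponentials of rate $a_{e_i}$.
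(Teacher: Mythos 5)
Your proposal follows essentially the same route as the paper: couple via Sabot's time-reversal identity for the RWDE, pass to logarithms, establish tightness, extract a subsequential limit, and identify the marginals via the Dirichlet-to-Bernoulli--Exponential convergence of Section~\ref{section-three}. The structure of the argument, including the observation that the additive identity (iv) and the closed-loop property hold \emph{exactly} at every finite temperature and therefore survive the limit, and the telescoping derivation of the final closed-path identity, matches the paper's proof.

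Two inaccuracies in your description of the zero-temperature limit should be corrected, since one of them, taken literally, makes a step fail. First, the scaling is reversed: the relevant limit uses Dirichlet parameters $\varepsilon\mathbf{a}$ with $\varepsilon\to 0^+$ and the rescaling $-\varepsilon\log(\cdot)$ (equivalently, parameters $\mathbf{a}/\beta$ with $\beta\to\infty$), not parameters $\beta\mathbf{a}$ with $\beta\to\infty$. With parameters $\beta\mathbf{a}$ and $\beta\to\infty$ the Dirichlet vector concentrates at the \emph{interior} point $\mathbf{p}^{(\mathbf{a})}$ of the simplex, so $-\tfrac1\beta\log U_i\to 0$ for every $i$ and the limit is degenerate rather than $\textrm{Be-Exp}_M(\mathbf{a})$. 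Second, in the small-parameter regime the Dirichlet vector concentrates on a \emph{random} vertex of the simplex chosen with probabilities $p_i^{(\mathbf{a})}=a_i/\sum_j a_j$, not on the vertex of the largest $a_e$; correspondingly the Bernoulli component $B$ in Definition~\ref{def:1} is not an argmax selector but a $\mathrm{Be}_M(\mathbf{p}^{(\mathbf{a})})$ variable. Since you defer the actual convergence to Proposition~\ref{prop:conv}, these slips do not change the architecture of your argument, but as written the parenthetical justifications are wrong and the limit you describe in your final paragraph is not the correct law.
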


In Section \ref{section-five}, we will see how we can use Theorem \ref{tone} to
compute certain passage times.

\section{The Bernoulli--Exponential distribution as the limit of Dirichlet random vectors}
\label{section-three}

To establish Theorem~\ref{tone}, an important step will be to show that the Bernoulli--Exponential distribution can be obtained as the weak limit of (suitably rescaled) Dirichlet random vectors. For convenience of the reader, we recall the definition of the Dirichlet distribution next.

\begin{definition} Given $M \in \N$ and a vector $\mathbf{a}=(a_1,\dots,a_M)$ with real positive entries, we define the \textit{Dirichlet distribution} of order $M$ with parameters $\mathbf{a}$, to be denoted by $\mathcal{D}_M(\mathbf{a})$, as the distribution of the random vector 
$U=(U_1,\dots,U_M)$ given by
\begin{equation}\label{eq:defU}
U_i:=\frac{W_i}{\sum_{j=1}^M W_j}, \quad i=1,\dots,M,
\end{equation} where $W_1,\dots,W_M$ are independent random variables with $W_i \sim \Gamma(a_i,1)$ for $i=1,\dots,M$, i.e.,
$W_i$ has density function
\[
f_{W_i}(w)= \frac{1}{\Gamma(a_i)}w^{a_i-1}\mathrm{e}^{-w}\mathrm{1}_{(0,\infty)}(w), 
\] where $\Gamma$ denotes the Gamma function.   
\end{definition}

\begin{remark}\label{rem:m=2} Whenever $M = 2$, we have that $U_i \sim \textrm{Beta}(a_i,a_1+a_2)$ and $U_2=1-U_1$.	
\end{remark}

The main result of this section is the following proposition.

\begin{proposition}\label{prop:conv} Fix $M \in \N$ and a vector $\mathbf{a}=(a_1,\dots,a_M)$ with real positive entries, and for each $\varepsilon > 0$ let us consider a random vector $U^{(\varepsilon)}=(U_1^{(\varepsilon)},\dots,U_M^{(\varepsilon)}) \sim \mathcal{D}_M(\varepsilon \mathbf{a})$. Then, as $\varepsilon \to 0^+$, 
\begin{equation}\label{eq:convd}
(-\varepsilon \log U_1^{(\varepsilon)},\dots,-\varepsilon \log U_M^{(\varepsilon)}) \overset{d}{\longrightarrow} X \sim \textrm{Be-Exp}_M(\mathbf{a}).
\end{equation}
\end{proposition}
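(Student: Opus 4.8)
The plan is to establish \eqref{eq:convd} by showing convergence of multivariate Laplace transforms. First I would observe that, since $U^{(\varepsilon)}$ lies on the simplex with strictly positive coordinates, each $U_j^{(\varepsilon)} \in (0,1]$ almost surely, so the vector $Y^{(\varepsilon)} := (-\varepsilon\log U_1^{(\varepsilon)},\dots,-\varepsilon\log U_M^{(\varepsilon)})$ takes values in $[0,\infty)^M$; by Definition~\ref{def:1}, the limit candidate $X$ is supported on $[0,\infty)^M$ as well. Hence, by the continuity theorem for multivariate Laplace transforms of probability measures on $[0,\infty)^M$, it suffices to prove that
\[
L_\varepsilon(\lambda) := \E\big[ e^{-\langle \lambda, Y^{(\varepsilon)}\rangle}\big] \;\xrightarrow[\varepsilon\to 0^+]{}\; \E\big[ e^{-\langle \lambda, X\rangle}\big] \qquad \text{for every } \lambda=(\lambda_1,\dots,\lambda_M)\in[0,\infty)^M.
\]

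The next step is an exact evaluation of $L_\varepsilon(\lambda)$. Writing $e^{-\langle\lambda,Y^{(\varepsilon)}\rangle}=\prod_{j=1}^M (U_j^{(\varepsilon)})^{\varepsilon\lambda_j}$ and applying the standard moment formula for the Dirichlet distribution (which follows directly from the Dirichlet integral $\int_{\Delta}\prod_j u_j^{c_j-1}\,du = \prod_j\Gamma(c_j)/\Gamma(\sum_j c_j)$ applied to the density of $\mathcal{D}_M(\varepsilon\mathbf{a})$, or alternatively from the representation $U_j^{(\varepsilon)}=W_j/\sum_k W_k$ with $W_k\sim\Gamma(\varepsilon a_k,1)$ independent, together with the independence of $U^{(\varepsilon)}$ and $\sum_k W_k$), one obtains for all $\lambda\in[0,\infty)^M$
\[
L_\varepsilon(\lambda) = \frac{\Gamma\big(\varepsilon\sum_{j} a_j\big)}{\Gamma\big(\varepsilon\sum_{j}(a_j+\lambda_j)\big)}\,\prod_{j=1}^M \frac{\Gamma\big(\varepsilon(a_j+\lambda_j)\big)}{\Gamma\big(\varepsilon a_j\big)}.
\]
Then I would let $\varepsilon\to 0^+$, using $z\,\Gamma(z)=\Gamma(1+z)\to 1$ as $z\to 0^+$, i.e. $\Gamma(\varepsilon c)\sim(\varepsilon c)^{-1}$ for each fixed $c>0$. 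Cancelling the resulting powers of $\varepsilon$ yields
\[
L_\varepsilon(\lambda) \;\xrightarrow[\varepsilon\to 0^+]{}\; \frac{\sum_{j}(a_j+\lambda_j)}{\sum_{j} a_j}\,\prod_{j=1}^M \frac{a_j}{a_j+\lambda_j}.
\]

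Finally I would compute the target Laplace transform directly from Definition~\ref{def:1}: conditioning on $B$ and using that $B$ is independent of $E$, with $\E[e^{-\lambda_j E_j}]=a_j/(a_j+\lambda_j)$, gives
\[
\E\big[e^{-\langle\lambda,X\rangle}\big] = \sum_{i=1}^M p_i^{(\mathbf{a})}\prod_{j\neq i}\frac{a_j}{a_j+\lambda_j},
\]
and a short manipulation (factoring out $\prod_j a_j/(a_j+\lambda_j)$ and using $p_i^{(\mathbf{a})}=a_i/\sum_j a_j$) shows this expression equals the limit obtained above, which completes the proof. I do not expect a genuine obstacle here — the heart of the argument is an elementary Gamma-function asymptotics computation. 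The only points needing a little care are the reduction to Laplace transforms, where the observation $U_j^{(\varepsilon)}\le 1$ is precisely what makes $Y^{(\varepsilon)}$ nonnegative so that the continuity theorem applies, and the bookkeeping in the final algebraic identity matching the two closed-form expressions.
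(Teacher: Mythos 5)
Your proof is correct, and it takes a genuinely different route from the paper. The paper argues by induction on $M$: the base case $M=2$ is imported from \cite[Lemma~5.1]{BC17}, the inductive step peels off the last coordinate by writing $U^{(\varepsilon)}_i=(1-B^{(\varepsilon)})V^{(\varepsilon)}_i$ with $B^{(\varepsilon)}$ Beta-distributed and $V^{(\varepsilon)}$ a lower-order Dirichlet vector (independent by Proposition~\ref{prop:ind}), and the limit is then identified as $\textrm{Be-Exp}_M(\mathbf{a})$ via the structural properties of that law established in Lemma~\ref{lemma:prelim}. You instead compute the Laplace transform $\E[\prod_j (U_j^{(\varepsilon)})^{\varepsilon\lambda_j}]$ in closed form from the Dirichlet moment formula, pass to the limit with $\Gamma(\varepsilon c)\sim(\varepsilon c)^{-1}$, and match the result against the Laplace transform of $\textrm{Be-Exp}_M(\mathbf{a})$; I checked the algebra and both closed forms do agree, namely $\bigl(\sum_j(a_j+\lambda_j)/\sum_j a_j\bigr)\prod_j a_j/(a_j+\lambda_j)=\sum_i p_i^{(\mathbf{a})}\prod_{j\neq i}a_j/(a_j+\lambda_j)$, and the appeal to the continuity theorem on $[0,\infty)^M$ is legitimate since the limit at $\lambda=0$ equals $1$, ruling out escape of mass. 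Your argument is shorter and fully self-contained: it needs neither the external base case from \cite{BC17} nor Lemma~\ref{lemma:prelim}. What the paper's induction buys in exchange is structural insight --- the decomposition of a Be-Exp vector into a two-component Be-Exp plus an independent lower-order one, which is exactly the content of Lemma~\ref{lemma:prelim} and makes transparent why the Bernoulli part selects a single vanishing coordinate --- whereas in your computation that structure is hidden inside the final algebraic identity.
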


To prove Proposition~\ref{prop:conv}, we will need two auxiliary results. The first is a standard property of Dirichlet random vectors, contained in Proposition~\ref{prop:ind} below, whose proof is omitted.
\begin{proposition}\label{prop:ind} For any $M \in \N$ and $a_1,\dots,a_M > 0$, if $W_1,\dots,W_M$ are independent random variables with $W_i \sim \Gamma(a_i,1)$ for $i=1,\dots,M$,  then the random vector $U$ defined in~\eqref{eq:defU} is independent of $\sum_{j=1}^M W_j$.
\end{proposition}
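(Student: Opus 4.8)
The plan is to perform the classical change of variables that separates the ``size'' $S:=\sum_{j=1}^M W_j$ from the ``direction'' $U=(U_1,\dots,U_M)$, and then to read off the asserted independence directly from the product structure of the resulting joint density. By independence of the $W_i$, the vector $\mathbf W=(W_1,\dots,W_M)$ has joint density on $(0,\infty)^M$ equal to
\[
f_{\mathbf W}(w_1,\dots,w_M)=\frac{1}{\prod_{i=1}^M\Gamma(a_i)}\Big(\prod_{i=1}^M w_i^{a_i-1}\Big)\,\mathrm{e}^{-\sum_{i=1}^M w_i}.
\]
Consider then the open simplex $\cS:=\{(u_1,\dots,u_{M-1})\in(0,\infty)^{M-1}:u_1+\dots+u_{M-1}<1\}$ and the map $\Phi:(0,\infty)^M\to\cS\times(0,\infty)$ given by $\Phi(\mathbf w)=(w_1/s,\dots,w_{M-1}/s,\,s)$ with $s:=\sum_{j=1}^M w_j$. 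This $\Phi$ is a smooth diffeomorphism with inverse $\Psi(u_1,\dots,u_{M-1},s)=(su_1,\dots,su_{M-1},\,s\,u_M)$, where throughout we abbreviate $u_M:=1-u_1-\dots-u_{M-1}$; note that $(U_1,\dots,U_{M-1},S)=\Phi(\mathbf W)$.

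The only genuinely computational point is the Jacobian of $\Psi$. I would compute $|\det D\Psi(u,s)|=s^{M-1}$; a painless way to see this is to write $\Psi=L\circ\Psi_1$ where $\Psi_1(u,s)=(su_1,\dots,su_{M-1},s)$ has block-triangular derivative of determinant $s^{M-1}$, and $L(w_1,\dots,w_{M-1},t)=(w_1,\dots,w_{M-1},\,t-w_1-\dots-w_{M-1})$ is a shear with determinant $1$. Using $\prod_{i=1}^M(su_i)^{a_i-1}=s^{\a-M}\prod_{i=1}^M u_i^{a_i-1}$ with $\a:=\sum_{i=1}^M a_i$, together with $\sum_{i=1}^M su_i=s$, the change-of-variables formula then gives that $(U_1,\dots,U_{M-1},S)$ has joint density on $\cS\times(0,\infty)$ equal to
\[
f_{\mathbf W}(\Psi(u,s))\,s^{M-1}=\Big(\tfrac{s^{\a-1}\mathrm{e}^{-s}}{\Gamma(\a)}\Big)\cdot\Big(\tfrac{\Gamma(\a)}{\prod_{i=1}^M\Gamma(a_i)}\prod_{i=1}^M u_i^{a_i-1}\Big),
\]
which is a product of a function of $s$ alone and a function of $(u_1,\dots,u_{M-1})$ alone.

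A joint density of this product form immediately yields that $S$ and $(U_1,\dots,U_{M-1})$ are independent (in fact each bracketed factor is already a probability density — the first the $\Gamma(\a,1)$ density and the second the Dirichlet density — but only the factorization, not the identification of the factors, is needed). To finish, observe that $U_M=1-\sum_{i=1}^{M-1}U_i$ is a measurable function of $(U_1,\dots,U_{M-1})$, hence so is the whole vector $U=(U_1,\dots,U_M)$; therefore independence of $S$ and $(U_1,\dots,U_{M-1})$ upgrades to independence of $S=\sum_{j=1}^M W_j$ and $U$, which is the assertion.

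I do not expect any real obstacle here: the argument is elementary, and the single step requiring care is the Jacobian identity $|\det D\Psi|=s^{M-1}$, handled by the shear factorization above. If one prefers to avoid any bookkeeping about the domain of the diffeomorphism, an equivalent route is to compute, for arbitrary bounded measurable $\phi$ on $(0,\infty)$ and $\psi$ on the closed simplex, the expectation $\E[\phi(S)\psi(U)]$ by substituting $\mathbf w=\Psi(u,s)$ in the defining integral; the same computation shows it equals a product of a $ds$-integral and a $du$-integral, i.e.\ $\E[\phi(S)]\,\E[\psi(U)]$, which is again the claim.
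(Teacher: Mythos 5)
Your proof is correct: the paper explicitly omits the proof of Proposition~\ref{prop:ind} as a standard property of Dirichlet vectors, and the change-of-variables argument you give (Jacobian $s^{M-1}$ via the shear factorization, leading to the product of the $\Gamma(\sum_i a_i,1)$ density in $s$ and the Dirichlet density in $u$) is exactly the canonical proof one would supply. The only degenerate point, not worth more than a remark, is $M=1$, where $U\equiv 1$ is constant and the independence is trivial.
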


The second auxiliary result gives two key properties of the Bernoulli--Exponential law.

\begin{lemma}\label{lemma:prelim} For any $M \in \N_{\geq 2}$ and vector $\mathbf{a}=(a_1,\dots,a_M)$ with real positive entries, if~$E_1,\dots,E_M$ are independent random variables with $E_i \sim \textrm{Exp}(a_i)$ for each $i=1,\dots,M$ and we set $E^*:=\min\{ E_1,\dots,E_M\}$, then 
\begin{equation}\label{eq:exp1}
(E^* , E_1-E^*,\dots,E_M- E^*) \sim \textrm{Exp}(\sum_{j=1}^M a_j) \times \textrm{Be-Exp}_M (\mathbf{a}).
\end{equation} In particular, if $\overline{E} \sim \textrm{Exp}(\sum_{j=1}^M a_j)$ and $Z=(Z_1,\dots,Z_M) \sim \textrm{Be-Exp}_M (\mathbf{a})$ are independent then 
\begin{equation}\label{eq:exp2}
(\overline{E}+Z_1,\dots,\overline{E}+Z_{M}) \overset{d}{=} (E_1,\dots,E_M).
\end{equation} On the other hand, if $B=(B_1,\dots,B_M) \sim \mathrm{Be}_M(\mathbf{p}^{(\mathbf{a})})$ with $\mathbf{p}^{(\mathbf{a})}=(p_1^{(\mathbf{a})},\dots,p_M^{(\mathbf{a})})$ as in \eqref{eq:defp} is independent of $E=(E_1,\dots,E_n)$ then, conditional on $B_M=0$, we have 
\begin{equation}\label{eq:exp3}
((1-B_1)E_1,\dots,(1-B_{M-1})E_{M-1}) \sim \textrm{Be-Exp}_{M-1}(a_1,\dots,a_{M-1}).
\end{equation}
\end{lemma}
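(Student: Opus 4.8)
The plan is to prove the three assertions \eqref{eq:exp1}, \eqref{eq:exp2}, \eqref{eq:exp3} essentially by direct computation, exploiting the memorylessness of the exponential distribution. The first claim \eqref{eq:exp1} is the heart of the matter, so I would start there. Fix $i \in \{1,\dots,M\}$. I would compute the joint law of $(E^*, E_1 - E^*, \dots, E_M - E^*)$ restricted to the event $\{E^* = E_i\}$ (which up to a null set equals the event that the minimum is uniquely attained at coordinate $i$). On this event, coordinate $i$ of the difference vector is $0$, and I want to show that: (a) the event $\{E^* = E_i\}$ has probability $p_i^{(\mathbf{a})} = a_i/\sum_j a_j$; (b) conditionally on this event, $E^*$ is $\textrm{Exp}(\sum_j a_j)$-distributed and independent of the differences; and (c) conditionally on this event, the remaining differences $(E_j - E^*)_{j \neq i}$ are independent $\textrm{Exp}(a_j)$ variables. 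The cleanest route is to write out the joint density of $(E_1,\dots,E_M)$, namely $\prod_j a_j e^{-a_j e_j}$ on $(0,\infty)^M$, and perform the change of variables $t = \min_j e_j = e_i$, $d_j = e_j - e_i$ for $j \neq i$, on the region $\{e_i < e_j \text{ for all } j \neq i\}$. The Jacobian is $1$, and the integrand factorizes as $\left(\prod_j a_j\right) e^{-(\sum_j a_j) t} \prod_{j \neq i} e^{-a_j d_j}$ over the region $t > 0$, $d_j > 0$. Reading off the normalizing constants, the mass of this region is $p_i^{(\mathbf{a})}$, the conditional law of $t$ is $\textrm{Exp}(\sum_j a_j)$, and the conditional law of $(d_j)_{j\neq i}$ is $\prod_{j \neq i}\textrm{Exp}(a_j)$, all mutually independent. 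Summing over $i$ and comparing with Definition~\ref{def:1}, this is exactly the statement that $(E^*, E_1 - E^*, \dots, E_M - E^*) \sim \textrm{Exp}(\sum_j a_j) \times \textrm{Be-Exp}_M(\mathbf{a})$, since the Bernoulli selection variable $B$ encodes which coordinate is the argmin and the Exponential part of the Bernoulli--Exponential vector records the differences.

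For \eqref{eq:exp2}, I would argue that it is an immediate corollary of \eqref{eq:exp1}: the map $(t, v_1, \dots, v_M) \mapsto (t + v_1, \dots, t + v_M)$ applied to the pair $(\overline{E}, Z)$ recovers $(E_1, \dots, E_M)$ because \eqref{eq:exp1} says precisely that the inverse transformation $(e_1,\dots,e_M) \mapsto (\min_j e_j, e_1 - \min_j e_j, \dots)$ carries the law of $(E_1,\dots,E_M)$ to $\textrm{Exp}(\sum_j a_j) \otimes \textrm{Be-Exp}_M(\mathbf{a})$, and this transformation is a bijection between $(0,\infty)^M$ and $(0,\infty) \times \{v \in [0,\infty)^M : \min_j v_j = 0\}$ with the two maps mutually inverse on the relevant sets (up to the null set where the minimum is attained twice, which carries no mass under either side). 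So one just inverts the pushforward.

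Finally, for \eqref{eq:exp3}, conditioning on $\{B_M = 0\}$ in Definition~\ref{def:1} means conditioning on $B$ taking one of the values $\mathrm{e}_1, \dots, \mathrm{e}_{M-1}$. By definition of the generalized Bernoulli law, the conditional distribution of $B$ given $\{B_M = 0\}$ is $\textrm{Be}_{M-1}$ with probability vector proportional to $(p_1^{(\mathbf{a})}, \dots, p_{M-1}^{(\mathbf{a})})$, i.e. proportional to $(a_1, \dots, a_{M-1})$, which is exactly $\mathbf{p}^{(\mathbf{a}')}$ for $\mathbf{a}' = (a_1,\dots,a_{M-1})$. Since $E$ is independent of $B$ and the coordinate $E_M$ does not appear in $((1-B_1)E_1, \dots, (1-B_{M-1})E_{M-1})$, the vector $(E_1, \dots, E_{M-1})$ still has independent $\textrm{Exp}(a_i)$ entries and is independent of the conditioned $B$. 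Matching against Definition~\ref{def:1} with $M$ replaced by $M-1$ and $\mathbf{a}$ by $\mathbf{a}'$ gives the claim.

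I expect the main obstacle to be purely bookkeeping rather than conceptual: carefully handling the ties (the event that two or more of the $E_j$ achieve the minimum simultaneously, which has probability zero) and making sure the change-of-variables region and Jacobian are written down correctly so that the factorization and the normalizing constants are transparent. The conditioning arguments for \eqref{eq:exp2} and \eqref{eq:exp3} are routine once \eqref{eq:exp1} and the definitions are in hand.
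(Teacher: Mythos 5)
Your proposal is correct and follows essentially the same route as the paper: decompose according to the events $\{E^*=E_i\}$ and compute directly with the joint exponential density to get the factorization in \eqref{eq:exp1}, deduce \eqref{eq:exp2} by inverting the transformation, and obtain \eqref{eq:exp3} by noting that only the conditional law of $B$ given $B_M=0$ needs to be identified. The paper computes joint tail probabilities where you perform a change of variables on densities, but this is a cosmetic difference.
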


\begin{proof} To check \eqref{eq:exp1}, by separating into the different cases $\{E^*=E_i\}$ for $i=1,\dots,M$, a straightforward computation using the joint density of $E$ shows that for any $s,t_1,\dots,t_M \geq 0$ we have
\[
\P ( E^* \geq s\,,\,E_1-E^* \geq t_1,\dots,E_M -E^* \geq t_M) =\mathrm{e}^{-(\sum_{j=1}^M a_j) s}   \sum_{i=1}^M p_i^{(\mathbf{a})} \prod_{j \neq i} \mathrm{e}^{-a_j t_j },
\] from where \eqref{eq:exp1} now immediately follows. Equation \eqref{eq:exp2} is an immediate consequence of \eqref{eq:exp1}, so it only remains to show \eqref{eq:exp3}. To this end we observe that, since the $E_i$'s are independent, we only need to check that, conditional on $B_M=0$, we have 
\begin{equation}\label{eq:exp4}
(B_1,\dots,B_{M-1}) \sim \textrm{Be}_{M-1}(a_1,\dots,a_{M-1}).
\end{equation} But this is immediate to verify: indeed, for each $i=1,\dots,M-1$ we have
\[
\P( (B_1,\dots,B_{M-1})=\mathrm{e}_i | B_M=0) = \frac{\P ( B = \mathrm{e}_i )}{1 - \P ( B = \mathrm{e}_M)} = \frac{ a_i}{{\sum_{j=1}^{M-1} a_j}}
\] which readily implies \eqref{eq:exp4}. This concludes the proof.
\end{proof}

We are now ready to prove Proposition~\ref{prop:conv}.

\begin{proof}[Proof of Proposition~\ref{prop:conv}] We proceed by induction on $M$. The case $M=1$ is trivial, so let us consider first the case $M=2$. By Remark~\ref{rem:m=2}, the statement for $M=2$ is exactly that of \cite[Lemma~5.1]{BC17}. Thus, given $M > 2$, let us suppose that \eqref{eq:convd} holds for any vector with Dirichlet distribution of order $M-1$ and show that then \eqref{eq:convd} must also hold for all vectors with Dirichlet distribution of order $M$. 

Let $\mathbf{a}=(a_1,\dots,a_{M})$ be a vector with real positive entries and, for each $\varepsilon > 0$, consider a random vector $U^{(\varepsilon)}=(U_1^{(\varepsilon)},\dots,U_{M}^{(\varepsilon)}) \sim \mathcal{D}_{M}(\varepsilon\mathbf{a})$. Without loss of generality, we can assume that $U_i^{(\varepsilon)}=\frac{W_i}{\sum_{j=1}^M W_j}$ for each $i$, where $W_1,\dots,W_M$ are independent random variables with $W_j \sim \Gamma(\varepsilon a_i,1)$ for each $j$. In this case, for each $i=1,\dots,M-1$ we may write $U_i^{(\varepsilon)}$ as 
\[
U^{(\varepsilon)}_i = (1-B^{(\varepsilon)}) \cdot V_i^{(\varepsilon)} 
\] where 
\[
B^{(\varepsilon)}:=\frac{W_{M}}{\sum_{j=1}^{M-1} W_j + W_{M}}=U_{M}^{(\varepsilon)}\qquad \text{ and }\qquad V_i^{(\varepsilon)}:=\frac{W_i}{\sum_{j=1}^{M-1} W_j}.
\] Observe that, by definition of Dirichlet distribution together with Remark~\ref{rem:m=2}, Proposition~\ref{prop:ind} and the independence of the variables $W_j$, we have that $B^{(\varepsilon)}$ and $V^{(\varepsilon)}$ are independent, with distributions $B^{(\varepsilon)} \sim \textrm{Beta}(\varepsilon a_{M},\varepsilon \sum_{j=1}^{M-1} a_j)$ and $V^{(\varepsilon)} \sim \mathcal{D}_M(\varepsilon a_1,\dots,\varepsilon a_{M-1})$. Therefore, by inductive hypothesis and the case $M=2$, we conclude that 
\[
(-\varepsilon \log U_1^{(\varepsilon)},\dots,-\varepsilon \log U_{M}^{(\varepsilon)}) \overset{d}{\longrightarrow} (Y_1+Z_1,Y_1+Z_2,\dots,Y_1+Z_{M-1},Y_2)=:\widetilde{X}
\] where $Y=(Y_1,Y_2)$ and $Z=(Z_1,\dots,Z_{M-1})$ are independent and respectively given by
\[
(-\varepsilon \log (1-B^{(\varepsilon)}), -\varepsilon \log B^{(\varepsilon)}) \overset{d}{\longrightarrow} Y \sim \textrm{Be-Exp}_2\Big(\sum_{j=1}^{M-1} a_j, a_{M}\Big)
\] and
\begin{equation}\label{eq:Z}
(-\varepsilon \log V_1^{(\varepsilon)},\dots,-\varepsilon \log V_{M}^{(\varepsilon)}) \overset{d}{\longrightarrow} Z \sim \textrm{Be-Exp}_{M-1}(a_1,\dots,a_{M-1}).
\end{equation} Thus, to conclude the proof we only need to show that $\widetilde{X} \sim \textrm{Be-Exp}_{M}(\mathbf{a})$.

To this end, we observe that, by conditioning on whether $\widetilde{X}_{M}=0$ (that is, $Y_2=0$) or not, if $p^{(\mathbf{a})}_M$ is as in \eqref{eq:defp}, then for any $\mathbf{x}=(x_1,\dots,x_M)$ with nonnegative entries we can write
\[
F_{\widetilde{X}}(\mathbf{x})= p^{(\mathbf{a})}_M F_{(\overline{E}+Z_1,\dots,\overline{E}+Z_{M-1})}(x_1,\dots,x_{M-1}) + (1- p^{(\mathbf{a})}_M) F_Z(x_1,\dots,x_{M-1})F_{E_M}(x_{M}),
\] where $Z$ is as in \eqref{eq:Z}, $\overline{E} \sim \textrm{Exp}(\sum_{j=1}^{M-1} a_j)$ and $E_M \sim \textrm{Exp}(a_{M})$ are independent of $Z$ and, given any random vector $H$, we write $F_H$ to denote its cumulative distribution function.

On the other hand, if we take $X \sim \textrm{Be-Exp}_M(\mathbf{a})$ to be the random vector from \eqref{eq:defX}, then, by conditioning on whether $X_M=0$ (that is, $B_M=1$) or not, we have
\[
F_{X}(\mathbf{x})= p^{(\mathbf{a})}_M F_{E|_{M-1}}(x_1,\dots,x_{M-1}) + (1-p^{(\mathbf{a})}_M) F_{X^{(M-1)}}(x_1,\dots,x_{M-1})F_{E_M}(x_{M}),
\] where $E|_{M-1}:=(E_1,\dots,E_{M-1})$ are the first $M-1$ coordinates of the vector $E$ from \eqref{eq:defX}, and $X^{(M-1)}$ is distributed as $(X_1,\dots,X_{M-1})$ conditioned on $B_M=0$.

The result now follows immediately from Lemma \ref{lemma:prelim}, which in particular tells us that $E|_{M-1} \overset{d}{=} (\overline{E}+Z_1,\dots,\overline{E}+Z_M)$ and $X^{(M-1)} \overset{d}{=} Z$.
\end{proof}

\section{Proof of Theorem~\ref{tone}}
\label{section-four}
We will now use Proposition~\ref{prop:conv} together with the property 
of statistical reversibility of random walks in Dirichlet  
environments to prove our Theorem~\ref{tone}. 
We begin by recalling the model of random walks in Dirichlet environments.

Given a strongly connected finite directed graph $G=(V,\cE)$, for each $x \in V$ let 
\[
\cP_x=\{ \mathbf{p}_x=(p(x,e))_{e \in \cE_x} : p(x,e) \geq 0 \text{ for all }e \in \cE_x\,,\,\sum_{e \in \cE_x} p(x,e) =1\}
\] be the space of all probability vectors on $\cE_x$ and consider the product space $\Omega:=\prod_{x \in V} \mathcal{P}_x$ endowed with the usual product topology. Any element $\omega \in \Omega$ will be called an \textit{environment}, i.e., each $\omega=(\omega(x))_{x \in V}$ is a finite family of probability vectors $\omega(x)=(\omega(x,e))_{e \in \cE_x} \in \cP_x$. Given any $z \in V$ and $\omega \in \Omega$, we define the \textit{random walk in the environment $\omega$} starting at $z$ as the Markov chain $(X_n)_{n \in \N_0}$ on $G$ whose law $P_{z,\omega}$ is given by
\[
P_{z,\omega}( X_0 = z) =1 \hspace{1cm} \text{ and }\hspace{1cm}P_{z,\omega}(X_{n+1}= y | X_n = x ) = \omega(x,(x,y)) 
\] for all $x,y \in V$ such that $(x,y) \in \cE$.  If we now let the environment $\omega$ be chosen at random according to some Borel probability measure $\P$ on $\Omega$, we obtain the measure $P_x$ on $\Omega \times V^{\N_0}$ given by
\[
P_x(A \times B) := \int_A P_{x,\omega}(B)\mathrm{d}\P, 
\] for all Borel sets $A \subseteq \Omega$ and $B \subseteq V^{\N_0}$. The process $(X_n)_{n \in \N_0}$ under the measure $P_z$ is called the \textit{random walk in random environment} (RWRE) with \textit{environmental law} $\P$ (starting at $z$). The measure $P_z$ is known as the \textit{annealed law} of the RWRE, whereas $P_{z,\omega}$ for a fixed $\omega \in \Omega$ is referred to as the \textit{quenched law}. Finally, if given a family of positive weights $\mathbf{a}=(a_e)_{e \in \cE}$ we consider the environmental law 
\[
\P_{\cD}^{(\mathbf{a})}:= \prod_{x \in V} \mathcal{D}_{M_x} (\mathbf{a}_x),
\] where, as before, we write $\mathbf{a}_x:=(a_e)_{e \in \cE_x}$, then the resulting RWRE is called a \textit{random~walk in a Dirichlet environment} (RWDE) with parameters $\mathbf{a}$. %In other words, RWDE is the RWRE obtained when the environment $\omega=(\omega(x))_{x \in V}$ is chosen so that the random vectors $\omega(x)$ are independent, each with Dirichlet distribution of order $M_x$ and parameters $\mathbf{a}_x$.

Now let us fix a family of positive weights $\mathbf{a}=(a_e)_{e \in \cE}$ on $\cE$ and, for each $\varepsilon>0$, consider a Dirichlet 
random environment $\omega_\varepsilon$ on $G$ with parameter $\varepsilon \mathbf{a}$, i.e.,  a family of independent random vectors $\omega_\varepsilon=(\omega_\varepsilon(x))_{x \in V}$ where, for each $x \in V$, the vector $\omega_\varepsilon(x)=(\omega_\varepsilon(x,e))_{e \in \cE_x}$ has Dirichlet distribution of order $M_x$ with parameters $\varepsilon \mathbf{a}_x:=(\varepsilon a_e)_{e \in \cE_x}$. Then, since $\varepsilon a_e> 0$ for every $e \in \cE$ and $G$ is strongly connected, for $\P^{(\varepsilon \mathbf{a})}$-almost every $\omega_\varepsilon$ the associated walk $(X_n)_{n \in \N_0}$ is an irreducible finite-state Markov chain under $P_{z,\omega}$ and, as such, admits a unique stationary distribution $\pi^{\omega_\varepsilon} = (\pi^{\omega_\varepsilon}(x))_{x \in V}$. Following 
\cite{ST17}, we can now define the time-reversed environment 
$\check\omega_\varepsilon$ in the dual graph $\check G$ by
\begin{equation}
  \label{eone}
\check\omega_\varepsilon (y, \check{e}) =\frac{\pi^{\omega_\varepsilon}(x)}{\pi^{\omega_\varepsilon}(y)}\omega_\varepsilon(x,e), \qquad e=(x,y) \in \cE.
\end{equation} Since $\textrm{div}(\mathbf{a}) = 0$, it follows from \cite[Lemma~3]{ST17} that $\check{\omega}_\varepsilon$ is a Dirichlet random environment on the dual graph $\check{G}$ with parameters $\varepsilon \check{\mathbf{a}}$, for $\check{\mathbf{a}}$ as defined in \eqref{eq:defrev}.

If for each $e=(x,y) \in \cE$ we define the quantities
\[
\phi_\varepsilon(x,e):=-\varepsilon\log\omega_\varepsilon(x,e) \qquad\text{ and }\qquad
\check \phi_\varepsilon(y,\check e):=-\varepsilon\log\check {\omega}_\varepsilon(y,\check e), 
\] then \eqref{eone} gives 
\begin{equation}
 \label{two}
\check\phi_\varepsilon(y,\check e)=\upsilon_\varepsilon(
e)+ \phi_\varepsilon(x,e), 
\end{equation}
where 
\[
\upsilon_\varepsilon(e):=-\varepsilon \log\pi^{\omega_\varepsilon}(x) 
+\varepsilon \log\pi^{\omega_\varepsilon}(y). 
\] Finally, consider the random vector
\[
\Upsilon_\varepsilon = ( \phi_\varepsilon(x,e), \check{\phi}_\varepsilon(y,\check{e}),\upsilon_\varepsilon(e))_{e=(x,y) \in \cE}.
\] 
It follows from Proposition \ref{prop:conv} and \eqref{two} that the family $(\Upsilon_\varepsilon)_{\varepsilon > 0}$ is tight as $\varepsilon \to 0$, as shown in the following lemma. 

\begin{lemma} \label{l:two}
For any sequence $(\varepsilon_n)_{n \in \N} \subseteq (0,\infty)$ such that $\lim_{n \to \infty} \varepsilon_n = 0$ we have that the sequence
  $(\Upsilon_{\varepsilon_n})_{n \in \N}$ is tight.
\end{lemma}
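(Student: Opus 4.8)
The plan is to prove tightness of $(\Upsilon_{\varepsilon_n})_{n \in \N}$ coordinate by coordinate, exploiting the decomposition \eqref{two} together with Proposition~\ref{prop:conv}. Since $\cE$ is finite and a finite family of random variables is tight precisely when each member is tight, it suffices to show that for every edge $e=(x,y) \in \cE$ the three real-valued sequences $(\phi_{\varepsilon_n}(x,e))_n$, $(\check{\phi}_{\varepsilon_n}(y,\check e))_n$ and $(\upsilon_{\varepsilon_n}(e))_n$ are each tight. First I would handle $(\phi_{\varepsilon_n}(x,e))_n$: for fixed $x$, the vector $(\phi_\varepsilon(x,e))_{e \in \cE_x} = (-\varepsilon \log \omega_\varepsilon(x,e))_{e \in \cE_x}$ is exactly of the form treated in Proposition~\ref{prop:conv}, since $\omega_\varepsilon(x) \sim \cD_{M_x}(\varepsilon \mathbf{a}_x)$, so along $\varepsilon_n \to 0$ it converges in distribution to $\textrm{Be-Exp}_{M_x}(\mathbf{a}_x)$; a convergent-in-distribution sequence is automatically tight, which settles this coordinate. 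The same argument applied on the dual graph $\check G$ — using that $\check\omega_\varepsilon$ is a Dirichlet environment with parameters $\varepsilon \check{\mathbf{a}}$, as recalled from \cite[Lemma~3]{ST17} — gives tightness of $(\check{\phi}_{\varepsilon_n}(y,\check e))_n$.

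It then remains to control the invariant-measure increments $\upsilon_{\varepsilon}(e) = -\varepsilon\log\pi^{\omega_\varepsilon}(x) + \varepsilon\log\pi^{\omega_\varepsilon}(y)$, and here the natural move is not to analyze $\pi^{\omega_\varepsilon}$ directly but to read $\upsilon_\varepsilon(e)$ off the identity \eqref{two}, which rearranges to
\[
\upsilon_\varepsilon(e) = \check{\phi}_\varepsilon(y,\check e) - \phi_\varepsilon(x,e).
\]
Since the right-hand side is a difference of two coordinates that we have just shown to be tight, and the sum (or difference) of two tight real sequences is tight — concretely, $\P(|\upsilon_{\varepsilon_n}(e)| > 2K) \le \P(|\check\phi_{\varepsilon_n}(y,\check e)| > K) + \P(|\phi_{\varepsilon_n}(x,e)| > K)$, and each term can be made small uniformly in $n$ by choosing $K$ large — tightness of $(\upsilon_{\varepsilon_n}(e))_n$ follows immediately. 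Combining the three coordinatewise statements over the finitely many edges $e \in \cE$ yields tightness of the whole vector $(\Upsilon_{\varepsilon_n})_n$, which is the claim.

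The proof is essentially routine once one observes that \eqref{two} lets one bypass any estimate on the stationary distribution $\pi^{\omega_\varepsilon}$; the only mild subtlety — and the step I would state most carefully — is the bookkeeping that each of the two marginals $(\phi_\varepsilon(x,e))_{e}$ and $(\check\phi_\varepsilon(y,\check e))_{\check e}$ is genuinely of the exact form to which Proposition~\ref{prop:conv} applies. For the first this is immediate from the definition of the Dirichlet environment $\omega_\varepsilon$; for the second it is exactly where the hypothesis $\textrm{div}(\mathbf{a}) = 0$ enters, via \cite[Lemma~3]{ST17}, ensuring $\check\omega_\varepsilon \sim \P_\cD^{(\varepsilon\check{\mathbf a})}$ so that Proposition~\ref{prop:conv} is applicable on $\check G$ as well. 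No single step is a serious obstacle; the main thing to get right is simply invoking these facts in the correct order and noting that convergence in distribution along $\varepsilon_n \to 0$ — which Proposition~\ref{prop:conv} delivers — already implies the tightness asserted here.
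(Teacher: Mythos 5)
Your argument is correct and coincides with the paper's own proof: tightness of the $\phi$ and $\check\phi$ coordinates from Proposition~\ref{prop:conv} (applied on $G$ and, via the divergence condition and \cite[Lemma~3]{ST17}, on $\check G$), then tightness of $\upsilon_{\varepsilon_n}(e)$ by reading it off \eqref{two} as a difference of tight sequences, and finally finiteness of $\cE$ to pass to the whole vector. Nothing to add.
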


\begin{proof} Since the graph $G$ is finite, that the family $(\phi_{\varepsilon_n}(x,e))_{e=(x,y) \in \cE}$ is tight (in $n \in \N$) follows from Proposition \ref{prop:conv}. Since $\check{\omega}_{\varepsilon_n} \sim \P^{(\varepsilon_n \check{\mathbf{a}})}_{\cD}$, similarly we have that $(\check{\phi}_{\varepsilon_n}(y,\check{e}))_{e=(x,y) \in \cE}$ is tight. In view of the relation in \eqref{two}, the tightness of $(\phi_{\varepsilon_n}(x,e),\check{\phi}_{\varepsilon_n}(y,\check{e}))_{e=(x,y) \in \cE}$ implies that of $(\upsilon_{\varepsilon_n}(e))_{e \in \cE}$. From here the result now follows.
 \end{proof}

As a consequence of Lemma \ref{l:two} we obtain that there exists some subsequence $(\varepsilon_n)_{n \in \N}$ such that $\lim_{n \to \infty} \varepsilon_n = 0$ 
and a random vector $\Upsilon=(\Upsilon^{(1)}(x,e),\Upsilon^{(2)}(y,\check{e}),\Upsilon^{(3)}(e))_{e=(x,y) \in \cE}$ satisfying that as $n \to \infty$,
\begin{equation}
\label{eq:convlim}
\Upsilon_{\varepsilon_n}\overset{d}{\longrightarrow} \Upsilon. 
\end{equation} We claim that $\Upsilon$ is the desired coupling.
Indeed, by Proposition~\ref{prop:conv} we have that $\Upsilon^{(1)} \sim \P^{(\mathbf{a})}_{\text{BE}}$ and $\Upsilon^{(2)} \sim \P^{(\check{\mathbf{a}})}_{\text{BE}}$. On the other hand, \eqref{two} and \eqref{eq:convlim} together imply that
\[
\Upsilon^{(2)}(y,\check{e})=\Upsilon^{(3)}(e) + \Upsilon^{(1)}(x,e)
\] for all $e=(x,y) \in \cE$. Finally, if $\pi=(x_0,x_1,\dots,x_{n-1},x_n)$ is a closed path, then given $\varepsilon > 0$ since $x_n=x_0$ we have that
\[
\sum_{i=0}^{n-1} \check{\phi}_\varepsilon(x_{i+1},(x_{i+1},x_i)) = -\varepsilon \log \left( \prod_{i=0}^{n-1} \frac{\pi^{\omega_\varepsilon}(x_i)}{\pi^{\omega_\varepsilon}(x_{i+1})}\omega(x,(x_i,x_{i+1}))\right) = \sum_{i=0}^{n-1} \phi_\varepsilon(x_{i},(x_{i+1},x_i)), 
\] so that by \eqref{two} the vector $(\upsilon_\varepsilon(e))_{e \in \cE}$ satisfies the closed loop property. It then follows from \eqref{eq:convlim} that $(\Upsilon^{(3)}(e))_{e \in \cE}$ must satisfy the closed loop property as well. Therefore, we see that $\Upsilon$ satisfies all the required properties and thus constitutes the desired coupling. This concludes the proof of Theorem~\ref{tone}.

\section{First passage return time}
\label{section-five}
We conclude by giving an example of how Theorem \ref{tone} can be used 
to do explicit computations of non-trivial first passage times. 

Consider the finite graph $G$ of Figure 1. The vertices are
classified into seven layers, plus an extra vertex which we call $y$.
The super-index $i$ in a vertex $x_k^i$ indicates to which layer it belongs, e.g. $x^i_k$ is the $k$-th vertex of the $i$-th layer. Let us fix three parameters $a_1,a_2,a_3>0$.  
We give the weight $a_1+a_2+a_3$ to the edge from $x^1_1$ to any vertex in
layer $2$. From layer $2$ to $3$ we give, for each $1\le k,j\le 3$, the weight $a_j$ to the edge connecting
$x^2_k$ to $x^3_j$. In general, given $j \in \{1,2,3\}$, for $2\le i \le 6$ we give the weight $a_j$ to any outgoing edge from a vertex $x^i_k$ in layer $i$ to the $j$-th vertex $x^{i+1}_j$ in layer $i+1$. Every outgoing edge from layer
$7$
to  $x^1_1$ or $y$ is assigned the weight $(a_1+a_2+a_3)/2$. The edge from $y$ to
$x^1_1$ has weight $a_1+a_2+a_3$, while the edge from $x_1^1$ to $y$
has weight $(a_1+a_2+a_3)/2$. The weights 
indicated in the edges of graph $G$ define a generalized 
Bernoulli--Exponential 
first passage percolation model with those weights.
Notice that the divergence condition $\text{div}({\bf a})=0$ is satisfied, where $\mathbf{a}$ denotes the vector of all such edge weights.

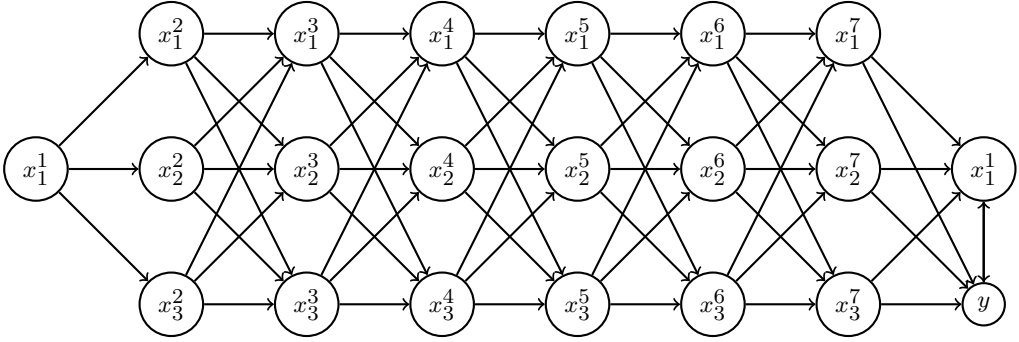
\begin{figure}
\begin{tikzpicture}[node distance={18mm}, thick, main/.style = {draw, circle}] 
\node[main] (1) {$x^1_1$}; 
\node[main] (2) [right of=1] {$x^2_2$}; 
\node[main] (3) [above of=2] {$x^2_1$}; 
\node[main] (4) [below  of=2] {$x^2_3$}; 
\node[main] (5) [right of=2] {$x^3_2$};
\node[main] (6) [right of=3] {$x^3_1$}; 
\node[main] (7) [right of=4] {$x^3_3$}; 
\node[main] (8) [right of=5] {$x^4_2$};
\node[main] (9) [right of=6] {$x^4_1$};
\node[main] (10) [right of=7] {$x^4_3$};
\node[main] (11) [right of=8] {$x^5_2$};
\node[main] (12) [right of=9] {$x^5_1$}; 
\node[main] (13) [right of=10] {$x^5_3$}; 
\node[main] (14) [right of=11] {$x^6_2$};
\node[main] (15) [right of=12] {$x^6_1$}; 
\node[main] (16) [right of=13] {$x^6_3$}; 
\node[main] (17) [right of=14] {$x^7_2$}; 
\node[main] (18) [right of=15] {$x^7_1$}; 
\node[main] (19) [right of=16] {$x^7_3$};
\node[main] (20) [right of=17] {$x^1_1$};
\node[main] (21) [right of=19] {$y$}; 
\draw[->] (1) --  (2); 
\draw[->] (1) -- (3); 
\draw[->] (1) -- (4);
\draw[->] (2) -- (5); 
\draw[->] (2) -- (6); 
\draw[->] (2) -- (7);
\draw[->] (3) -- (5); 
\draw[->] (3) -- (6); 
\draw[->] (3) -- (7);
\draw[->] (4) -- (5); 
\draw[->] (4) -- (6); 
\draw[->] (4) -- (7); 
\draw[->] (5) -- (8); 
\draw[->] (5) -- (9); 
\draw[->] (5) -- (10); 
\draw[->] (6) -- (8); 
\draw[->] (6) -- (9); 
\draw[->] (6) -- (10); 
\draw[->] (7) -- (8); 
\draw[->] (7) -- (9); 
\draw[->] (7) -- (10); 
\draw[->] (8) -- (11); 
\draw[->] (8) -- (12); 
\draw[->] (8) -- (13); 
\draw[->] (9) -- (11); 
\draw[->] (9) -- (12); 
\draw[->] (9) -- (13); 
\draw[->] (10) -- (11); 
\draw[->] (10) -- (12); 
\draw[->] (10) -- (13); 
\draw[->] (11) -- (14); 
\draw[->] (11) -- (15); 
\draw[->] (11) -- (16); 
\draw[->] (12) -- (14); 
\draw[->] (12) -- (15); 
\draw[->] (12) -- (16); 
\draw[->] (13) -- (14); 
\draw[->] (13) -- (15); 
\draw[->] (13) -- (16); 
\draw[->] (14) -- (17); 
\draw[->] (14) -- (18); 
\draw[->] (14) -- (19); 
\draw[->] (15) -- (17); 
\draw[->] (15) -- (18); 
\draw[->] (15) -- (19); 
\draw[->] (16) -- (17); 
\draw[->] (16) -- (18); 
\draw[->] (16) -- (19);
\draw[->] (17) -- (20); 
\draw[->] (18) -- (20); 
\draw[->] (19) -- (20); 
\draw[->] (17) -- (21); 
\draw[->] (18) -- (21); 
\draw[->] (19) -- (21); 
%\draw[->] (20) -- (17); 
%\draw[->] (20) -- (18); 
%\draw[->] (20) -- (19); 
\draw[->] (20) -- (21); 
\draw[->] (21) -- (20); 
%\draw[->] (21) -- (17); 
%\draw[->] (21) -- (18); 
%\draw[->] (21) -- (19); 
\end{tikzpicture}
\caption{Edges from layer $i$ to $j$ with $1\le i,j\le 6$ are assigned
the weights $a_1, a_2$ or $a_3$.}
\end{figure}
%\draw (1) to [out=135,in=90,looseness=1.5] (5); 
%\draw (1) to [out=180,in=270,looseness=5] (1); 
%\draw (3) -- (4); 
%\draw (5) -- (4); 
%\draw[->] (5) to [out=315, in=315, looseness=2.5] (3); 
%\draw[->] (6) -- node[midway, above right, sloped, pos=1] {+1} (4); 

%In Figure 1, the vertex $x^1_1$ appears twice, being identified as one point.
%To each directed edge $(x^i_k,x^{i'}_{k'})$ of the graph of Figure 1
%we assign a weight $a^{i,i'}_{k,k'}$ in such a way that the divergence
%condition is satisfied. We then assign to each vertex a generalized
%Bernoulli-Exponential
%random vector with parameters given by the number of outgoing edges
%and
%${\bf a}$ the vector whose components are the outgoing weights from
%the vertex. 

Define now the passage time,
\[
T_c(x_1^1, x_1^1):=\inf_{\pi\in\Pi_c(x^1_1,x^1_1)}
\left\{\sum_{i=0}^{n-1} w(x_i,\Delta x_i)\right\},
\]
where $\Pi_c$ is the set of paths which exit $x^1_1 $
through the right in Figure 1 (first jump is through any of the edges 
from $x^1_1$ to layer $2$) and finish entering again $x^1_1$ through 
layer $7$ or vertex $y$. By the statistical reversibility
of Theorem \ref{tone}, we have that

\begin{equation}
  \label{tthree}
T_c(x_1^1, x_1^1)=\inf_{\pi\in \check{\Pi}_c(x^1_1,x^1_1)}\left\{\sum_{i=0}^{n-1} \check w(x_i,\Delta x_i)\right\},
\end{equation}
where $\check{\Pi}_c$ is the set of paths in the dual graph $\check{G}$ which exit $x^1_1 $
with a first jump  through any of the edges 
from $x^1_1$ to layer $7$ or vertex $y$, and finish entering again $x^1_1$ through 
layer~$2$. But now note that for the reversed process there is always
a path of $0$ weight from any vertex in layer~$7$ to the vertex
$x^1_1$. Hence the minimum in the right-hand side of (\ref{tthree}) is
just the~minimum of the passage time in the dual graph $\check{G}$ from $x^1_1$ to layer $7$ (without revisiting $x^1_1$), which can be immediately computed and gives
\[
T_c(x^1_1,x^1_1)=\min\{\check w_{x^1_1,x^7_1}, \check w_{x^1_1,x^7_2}, \check
w_{x^1_1,x^7_3}
, \check w_{x^1_1,y}+\check w_{y,x^7_1},
\check w_{x^1_1,y}+\check w_{y,x^7_2},
\check w_{x^1_1,y}+\check w_{y,x^7_3}
\}.
\]

\subsection*{Acknowledgements}
  Alejandro Ram\'\i rez was supported by Fondecyt 1220396. Santiago Saglietti was supported by Fondecyt 11200690.

\end{document}